\begin{document}
\begin{center}
\thispagestyle{empty} \setcounter{page}{1} {\large\bf The fixed
point alternative and the stability of ternary derivations
\vskip.20in {\Small \bf A. Ebadian and Sh.
Najafzadeh} \\[2mm]

{\footnotesize  Department of Mathematics,
Urmia University, Urmia, Iran\\

{\footnotesize  Department of Mathematics, University of Maragheh,
Maragheh, Iran\\}

 e-mail: {\tt  ebadian.ali@gmail.com }}}
\end{center}
\vskip 5mm \noindent{\footnotesize{\bf Abstract.} Using fixed point
methods, we prove the generalized Hyers--Ulam--Rassias stability of
ternary derivations in ternary Banach algebras for the generalized
Jensen--type functional equation
$$\mu f(\frac{x+y+z}{3})+\mu f(\frac{x-2y+z}{3})+\mu f(\frac{x+y-2z}{3})= f(\mu x)~.$$

\vskip.10in \footnotetext {2000 Mathematics Subject
Classification. Primary 39B52; Secondary 39B82; 46B99; 17A40.}

\footnotetext {Keywords: Alternative fixed point;
Hyers--Ulam--Rassias stability; ternary algebra; ternary
homomorphism.}
\vskip.10in
\newtheorem{df}{Definition}[section]
\newtheorem{rk}[df]{Remark}
\newtheorem{lem}[df]{Lemma}
\newtheorem{thm}[df]{Theorem}
\newtheorem{pro}[df]{Proposition}
\newtheorem{cor}[df]{Corollary}
\newtheorem{ex}[df]{Example}
\setcounter{section}{0} \numberwithin{equation}{section} \vskip
.2in
\begin{center}
\section{Introduction}
\end{center}
Ternary algebraic operations were considered in the 19 th century
by several mathematicians such as A. Cayley [1] who introduced the
notion of "cubic matrix" which in turn was generalized by
Kapranov, Gelfand and Zelevinskii in 1990 ([2]).\\
The comments on physical applications of ternary structures can be
found in [3--12].\\
Let $A$ be a Banach ternary algebra and $X$ be a Banach space. Then
$X$ is called a ternary Banach $A$-module, if module operations $A
\times A \times X \to X,$ $A \times X \times A \to X,$ and $X \times
A \times A \to X$ which are $\Bbb C$-linear in every variable.
Moreover  satisfy
$$[[xab]_X~cd]_X=[x[abc]_A~d]_X=[xa[bcd]_A]_X,$$
$$[[axb]_X~cd]_X=[a[xbc]_X~d]_X=[ax[bcd]_A]_X,$$
$$[[abx]_X~cd]_X=[a[bxc]_X~d]_X=[ab[xcd]_X]_X,$$
$$[abc]_A~xd]_X=[a[bcx]_X~d]_X=[ab[cxd]_X]_X,$$
$$[[abc]_A~dx]_X=[a[bcd]_A~x]_X=[ab[cdx]_X]_X$$
for all $x \in X$ and all $a,b,c,d \in A,$
$$\max\{\|xab\|,\|axb\|,\|abx\|\}\leq \|a\| \|b\| \|x\|$$
for all $x \in X$ and all $a,b \in A.$\\ Let $(A,[~]_A)$ be a Banach
ternary algebra over a scalar field $\Bbb R$ or $\Bbb C$ and
$(X,[~]_X)$ be a ternary Banach $A$-module. A linear mapping
$D:(A,[~]_A) \to (X,[~]_X)$ is called a ternary derivation, if
$$D([xyz]_A)=[D(x)yz]_X+[xD(y)z]_X+[xyD(z)]_X$$ for all
$x,y,z\in A.$\\
A linear mapping $D:(A,[~]_A) \to (X,[~]_X)$ is called a ternary
Jordan derivation, if
$$D([xxx]_A)=[D(x)xx]_X+[xD(x)x]_X+[xxD(x)]_X$$ for
all $x\in A.$\\
The stability of functional equations was first introduced  by S. M.
Ulam [13] in 1940. More precisely, he proposed the following
problem: Given a group $G_1,$ a metric group $(G_2,d)$ and a
positive number $\epsilon$, does there exist a $\delta>0$ such that
if a function $f:G_1\longrightarrow G_2$ satisfies the inequality
$d(f(xy),f(x)f(y))<\delta$ for all $x,y\in G1,$ then there exists a
homomorphism $T:G_1\to G_2$ such that $d(f(x), T(x))<\epsilon$ for
all $x\in G_1?$ As mentioned above, when this problem has a
solution, we say that the homomorphisms from $G_1$ to $G_2$ are
stable. In 1941, D. H. Hyers [14] gave a partial solution of
$Ulam^{,}s$ problem for the case of approximate additive mappings
under the assumption that $G_1$ and $G_2$ are Banach spaces. In
1950, T. Aoki [25] was the second author to treat this problem for
additive mappings (see also [16]). In 1978, Th. M. Rassias [17]
generalized the theorem of Hyers by considering the stability
problem with unbounded Cauchy differences. This phenomenon of
stability that was introduced by Th. M. Rassias [17] is called the
Hyers--Ulam--Rassias stability. According to Th. M. Rassias theorem:
\begin{thm}\label{t1} Let $f:{E}\longrightarrow{E'}$ be a mapping from
 a norm vector space ${E}$
into a Banach space ${E'}$ subject to the inequality
$$\|f(x+y)-f(x)-f(y)\|\leq \epsilon (\|x\|^p+\|y\|^p)$$
for all $x,y\in E,$ where $\epsilon$ and p are constants with
$\epsilon>0$ and $p<1.$ Then there exists a unique additive
mapping $T:{E}\longrightarrow{E'}$ such that
$$\|f(x)-T(x)\|\leq \frac{2\epsilon}{2-2^p}\|x\|^p$$ for all $x\in E.$
If $p<0$ then inequality $(1.3)$ holds for all $x,y\neq 0$, and
$(1.4)$ for $x\neq 0.$ Also, if the function $t\mapsto f(tx)$ from
$\Bbb R$ into $E'$ is continuous for each fixed $x\in E,$ then T
is linear.
\end{thm}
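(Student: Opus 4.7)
The plan is to apply the classical direct method of Hyers, adapted to the unbounded Cauchy difference setting introduced by Rassias. First I would substitute $y = x$ in the hypothesis to obtain $\|f(2x) - 2f(x)\| \leq 2\epsilon\|x\|^p$, which after dividing by $2$ becomes
$$\left\|\frac{f(2x)}{2} - f(x)\right\| \leq \epsilon\|x\|^p.$$
Iterating this on the dyadic sequence $\bigl(f(2^n x)/2^n\bigr)_{n \geq 0}$ via a telescoping sum would yield
$$\left\|\frac{f(2^n x)}{2^n} - f(x)\right\| \leq \epsilon\|x\|^p \sum_{k=0}^{n-1} 2^{(p-1)k}.$$

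Since $p < 1$, the geometric series on the right converges with sum $\tfrac{2}{2-2^p}$, and an analogous estimate between the $m$-th and $n$-th terms shows that $\bigl(f(2^n x)/2^n\bigr)$ is Cauchy in the Banach space $E'$. I would then set $T(x) := \lim_{n\to\infty} f(2^n x)/2^n$ and prove additivity by replacing $(x,y)$ with $(2^n x, 2^n y)$ in the hypothesis, dividing by $2^n$, and passing to the limit; because $p - 1 < 0$, the right-hand side $\epsilon 2^{n(p-1)}(\|x\|^p+\|y\|^p)$ vanishes. Letting $n\to\infty$ in the telescoping inequality then produces the bound $\|f(x) - T(x)\| \leq \frac{2\epsilon}{2-2^p}\|x\|^p$.

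For uniqueness, if $S \colon E \to E'$ is another additive map satisfying the same estimate, then $S(2^n x) = 2^n S(x)$ and likewise for $T$, so
$$\|T(x) - S(x)\| = 2^{-n}\|T(2^n x) - S(2^n x)\| \leq 2^{-n} \cdot \frac{4\epsilon}{2-2^p} \cdot 2^{np}\|x\|^p \longrightarrow 0.$$
For the $\R$-linearity addendum, additivity already forces $T(rx) = rT(x)$ for every $r \in \Q$; combining the pointwise bound with the continuity of $t \mapsto f(tx)$ transfers continuity to $t \mapsto T(tx)$, and density of $\Q$ in $\R$ then upgrades rational homogeneity to genuine $\R$-linearity.

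The main obstacle is really just careful bookkeeping: the interplay between the exponent $p < 1$ (which ensures $\sum 2^{(p-1)k}$ converges) and the $2^n$-scaling must be invoked in each of the three limit arguments (existence of $T$, additivity of $T$, and uniqueness of $T$). A minor subtlety arises when $p < 0$: the hypothesis and conclusion must be restricted to nonzero arguments, as stated in the theorem, but the proof otherwise proceeds unchanged.
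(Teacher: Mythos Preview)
Your argument is correct and is precisely the classical direct-method proof due to Rassias. Note, however, that the paper does not supply its own proof of this statement: Theorem~1.1 is quoted in the introduction as the known theorem of Th.~M.~Rassias (reference~[17]) and is used only as historical background, so there is no in-paper proof to compare against.
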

During the last decades several stability problems of functional
equations have been investigated by many mathematicians. A large
list of references concerning the stability of functional
equations can be found in  [18--29].\\
 C. Park [30]  has contributed works to the stability
problem of ternary homomorphisms and
ternary derivations (see also [31]).\\

Recently, C$\breve{a}$dariu and Radu  applied the fixed point method
to the investigation of the  functional equations. (see also
[32--38]).

 In this paper, we will adopt the
fixed point alternative of C\u{a}dariu  and Radu to prove the
generalized Hyers--Ulam--Rassias stability of ternary derivations on
ternary Banach algebras associated with the following functional
equation
$$\mu f(\frac{x+y+z}{3})+\mu f(\frac{x-2y+z}{3})+\mu f(\frac{x+y-2z}{3})= f(\mu x)~.$$

Throughout this paper, assume that $(A,[~]_A)$ is a
 ternary Banach algebra and $X$ is a  ternary Banach $A-$module.
 \vskip 5mm
%================================================================
\section{Main Results}
%\setcounter{equation}{0}
%================================================================

Before proceeding to the main results, we will state the following
theorem.

\begin{thm}\label{t2}(the alternative of fixed point [32]).
Suppose that we are given a complete generalized metric space
$(\Omega,d)$ and a strictly contractive mapping
$T:\Omega\rightarrow\Omega$ with Lipschitz constant $L$. Then for
each given $x\in\Omega$, either

$d(T^m x, T^{m+1} x)=\infty~$ for all $m\geq0,$\\
 or other exists a natural number $m_{0}$ such that\\
$\star\hspace{.25cm} d(T^m x, T^{m+1} x)<\infty ~$for all $m \geq m_{0};$ \\
 $\star\hspace{.1cm}$ the sequence $\{T^m x\}$ is convergent to a fixed point $y^*$ of $~T$;\\
$\star\hspace{.2cm} y^*$is the unique fixed point of $~T$ in the
set $~\Lambda=\{y\in\Omega:d(T^{m_{0}} x, y)<\infty\};$\\
$\star\hspace{.2cm} d(y,y^*)\leq\frac{1}{1-L}d(y, Ty)$ for all
$~y\in\Lambda.$
\end{thm}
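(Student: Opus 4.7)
The plan is to follow the classical Diaz--Margolis argument, adapted to the setting of a \emph{generalized} metric space in which $d$ is permitted to take the value $+\infty$. The dichotomy in the conclusion reflects exactly this possibility, and the proof splits along it.

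First I would examine the sequence of successive distances $a_m := d(T^m x, T^{m+1} x)$. By strict contractivity, $a_{m+1} \leq L\, a_m$, and since $L<1$, the monotone relation $a_{m+1} \leq L a_m$ implies that if some $a_{m_0} < \infty$ then $a_m < \infty$ for every $m \geq m_0$ (and in fact $a_m \leq L^{m-m_0} a_{m_0}$). Thus the alternative is immediate: either $a_m = \infty$ for all $m \geq 0$, or there is a least $m_0$ with $a_{m_0} < \infty$, from which point on all successive distances are finite. This is the first bullet.

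Second, assume the finite case and prove Cauchy-ness of $\{T^m x\}_{m \geq m_0}$. For $n > m \geq m_0$, a finite telescoping using the triangle inequality (which makes sense because each summand is finite) together with $a_k \leq L^{k-m_0} a_{m_0}$ yields
$$d(T^m x, T^n x) \leq \sum_{k=m}^{n-1} a_k \leq \frac{L^{m-m_0}}{1-L}\, a_{m_0},$$
which tends to $0$ as $m \to \infty$. By completeness of $(\Omega,d)$, the sequence converges to some $y^* \in \Omega$. Because $T$ is $L$-Lipschitz it is continuous, so passing to the limit in $T^{m+1}x = T(T^m x)$ gives $Ty^* = y^*$, establishing the second bullet.

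Third, I would dispose of uniqueness and the error estimate together, which is really the only step where one must be careful about $+\infty$. Define $\Lambda = \{y \in \Omega : d(T^{m_0} x, y) < \infty\}$. For any $y \in \Lambda$, the triangle inequality and the estimate above show $d(y, y^*) < \infty$, and then
$$d(y, y^*) \leq d(y, Ty) + d(Ty, Ty^*) \leq d(y, Ty) + L\, d(y, y^*).$$
Since $d(y, y^*)$ is finite we may subtract, obtaining $d(y, y^*) \leq \frac{1}{1-L} d(y, Ty)$, which is the fourth bullet. Applying this with $y$ another fixed point in $\Lambda$ gives $d(y, y^*) \leq 0$, hence the uniqueness claim (third bullet). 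The main subtlety throughout is precisely this bookkeeping with infinite distances: one must verify that every triangle inequality and cancellation step is performed with finite quantities, which is exactly what the set $\Lambda$ and the choice of $m_0$ are arranged to guarantee.
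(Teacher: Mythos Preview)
Your argument is the standard Diaz--Margolis proof and is correct; the only point to flag is that the paper itself does \emph{not} prove this theorem at all --- it merely quotes the statement from reference~[32] (C\u{a}dariu--Radu) as a known tool, so there is no in-paper proof to compare against. What you have written is essentially the original proof from the literature, so in that sense your approach coincides with the one the authors implicitly rely on.
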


We start our work with the following theorem which establishes the
generalized Hyers--Ulam--Rassias stability of ternary derivations.

\begin{thm}\label{t1}
Let $f:A\to X$ be a mapping for which there exists a function
$\phi:A^6\to [0,\infty)$ such that

$$\|\mu f(\frac{x+y+z}{3})+\mu f(\frac{x-2y+z}{3})+\mu f(\frac{x+y-2z}{3})- f(\mu
x)$$
$$+f([abc]_A)-[f(a)bc]_X-[af(b)c]_X-[abf(c)]_X\|_X\leq\phi(x,y,z,a,b,c),\eqno(2.1)$$
for all $\mu \in \Bbb T$ and all $x,y,z,a,b,c \in A.$ If  there
exists an $L<1$ such that $$\phi(x,y,z,a,b,c)\leq 3L
\phi(\frac{x}{3},\frac{y}{3},\frac{z}{3},\frac{a}{3},\frac{a}{3},\frac{c}{3})$$
for all $x,y,z,a,b,c\in A,$ then there exists a unique ternary
derivation $D:A\to X$ such that
$$\|f(x)-D(x)\|_B \leq \frac{L}{1-L}\phi(x,0,0,0,0,0)\eqno(2.2)$$
for all $x \in A.$
\end{thm}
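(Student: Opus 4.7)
The plan is to apply the fixed-point alternative (Theorem~\ref{t2}) on the generalized metric space $(\Omega,d)$, where $\Omega=\{g:A\to X\}$ and
$$d(g,h)=\inf\{C\ge 0:\|g(x)-h(x)\|_X\le C\,\phi(x,0,0,0,0,0)\text{ for all }x\in A\},$$
with the convention $\inf\emptyset=+\infty$. Completeness of $(\Omega,d)$ is routine. Define $T:\Omega\to\Omega$ by $(Tg)(x)=\tfrac{1}{3}g(3x)$. The hypothesis $\phi(3x,0,\ldots,0)\le 3L\,\phi(x,0,\ldots,0)$ yields $d(Tg,Th)\le L\,d(g,h)$ in one line, so $T$ is strictly contractive with Lipschitz constant $L$.

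To produce the initial estimate, first set $\mu=1$ and every argument to $0$ in (2.1) to obtain $3\|f(0)\|_X\le\phi(0,\ldots,0)$; iterating the scaling of $\phi$ at the origin forces $f(0)=0$. Then $\mu=1$, $y=z=0$, $a=b=c=0$ in (2.1) gives $\|3f(x/3)-f(x)\|_X\le\phi(x,0,0,0,0,0)$, and replacing $x$ by $3x$ and dividing by $3$ produces $\|f(x)-(Tf)(x)\|_X\le L\,\phi(x,0,0,0,0,0)$, i.e.\ $d(f,Tf)\le L$. Theorem~\ref{t2} therefore supplies a unique $D\in\Omega$ with $D=TD$, $D(x)=\lim_{n\to\infty}3^{-n}f(3^n x)$, and $d(f,D)\le\tfrac{1}{1-L}d(f,Tf)\le\tfrac{L}{1-L}$, which is precisely (2.2).

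To check $\mathbb{C}$-linearity, take $a=b=c=0$ in (2.1), replace $(x,y,z)$ by $(3^n x,3^n y,3^n z)$, divide by $3^n$, and use $\phi(3^n x,3^n y,3^n z,0,0,0)\le(3L)^n\phi(x,y,z,0,0,0)$ to see that the right-hand side decays like $L^n$; in the limit,
$$\mu D\Bigl(\tfrac{x+y+z}{3}\Bigr)+\mu D\Bigl(\tfrac{x-2y+z}{3}\Bigr)+\mu D\Bigl(\tfrac{x+y-2z}{3}\Bigr)=D(\mu x)\qquad(\mu\in\mathbb{T}).$$
Setting $y=z=0$ and $\mu=1$ gives $3D(x/3)=D(x)$, whence for general $\mu\in\mathbb{T}$ one deduces $D(\mu x)=\mu D(x)$. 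Taking $z=0$, $\mu=1$ and changing variables to $u=(x+y)/3$, $v=(x-2y)/3$ rewrites the Jensen-type equation as $D(2u+v)=2D(u)+D(v)$; substituting $v\mapsto v-2u$ then produces $D(u+v)=D(u)+D(v)$, so $D$ is additive. The standard midpoint-chord trick (every $|\lambda|\le 1$ equals $(\mu_1+\mu_2)/2$ for some $\mu_1,\mu_2\in\mathbb{T}$) combines additivity and $\mathbb{T}$-linearity into full $\mathbb{C}$-linearity.

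Finally, for the derivation identity, set $x=y=z=0$ in (2.1), replace $(a,b,c)$ by $(3^n a,3^n b,3^n c)$, and divide by $27^n$. Trilinearity of $[\cdot,\cdot,\cdot]_A$ and $[\cdot,\cdot,\cdot]_X$ converts (2.1) into
$$\Bigl\|27^{-n}f(3^{3n}[abc]_A)-[3^{-n}f(3^n a),b,c]_X-[a,3^{-n}f(3^n b),c]_X-[a,b,3^{-n}f(3^n c)]_X\Bigr\|_X\le(L/9)^n\phi(0,0,0,a,b,c),$$
and letting $n\to\infty$ delivers $D([abc]_A)=[D(a),b,c]_X+[a,D(b),c]_X+[a,b,D(c)]_X$. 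Uniqueness of $D$ is the uniqueness clause of Theorem~\ref{t2}. The main technical obstacle I anticipate is the third step---extracting honest additivity of $D$ from the weighted Jensen-type identity while respecting the built-in scaling of $\phi$; once additivity and $\mathbb{T}$-linearity are in hand, both $\mathbb{C}$-linearity and the ternary derivation identity follow from clean scaling-plus-limit arguments powered by the sharp decays $L^n$ and $(L/9)^n$.
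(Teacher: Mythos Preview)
Your proposal is correct and follows the same fixed-point strategy as the paper: the same complete generalized metric space, the same contraction $(Tg)(x)=\tfrac13 g(3x)$, the same initial bound $d(f,Tf)\le L$, and the same scaling-plus-limit arguments for $\mathbb{C}$-linearity and the derivation identity. The only cosmetic differences are that the paper obtains additivity in one stroke via the invertible substitution $(w,t,s)=\bigl(\tfrac{x+y+z}{3},\tfrac{x-2y+z}{3},\tfrac{x+y-2z}{3}\bigr)$, which yields $D(w)+D(t)+D(s)=D(w+t+s)$ directly, and reads off $D(\mu x)=\mu D(x)$ from the choice $y=z=x$ rather than your $y=z=0$.
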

\begin{proof}It follows from
$$\phi(x,y,z,a,b,c)\leq 3L
\phi(\frac{x}{3},\frac{y}{3},\frac{z}{3},\frac{a}{3},\frac{b}{3},\frac{c}{3})$$
that
$$lim_j 3^{-j}\phi(3^jx,3^jy,3^iz,3^ia,3^ib,3^ic)=0 \eqno(2.3)$$ for all $x,y,z,a,b,c\in
A.$\\
Put $\mu =1, y=z=a=b=c=0$ in (2.1) to obtain
$$\|3f(\frac{x}{3})-f(x)\|_B\leq \phi(x,0,0,0,0,0)\eqno (2.4)$$
for all $x\in A.$ Hence,
$$\|\frac{1}{3}f(3x)-f(x)\|_B\leq \frac{1}{3} \phi(3x,0,0,0,0,0)\leq L\phi(x,0,0,0,0,0)\eqno (2.5)$$
for all $x\in A.$\\
Consider the set $X':=\{g\mid g:A\to B\}$ and introduce the
generalized metric on $X'$:
$$d(h,g):=inf\{C\in \Bbb R^+:\|g(x)-h(x)\|_B\leq C\phi(x,0,0,0,0,0) \forall x\in A\}.$$
It is easy to show that $(X',d)$ is complete. Now we define  the
linear mapping $J:X'\to X'$ by $$J(h)(x)=\frac{1}{3}h(3x)$$ for all
$x\in A$. By Theorem 3.1 of [32], $$d(J(g),J(h))\leq Ld(g,h)$$ for
all $g,h\in X'.$\\
It follows from (2.5) that  $$d(f,J(f))\leq L.$$ By Theorem 1.2, $J$
has a unique fixed point in the set $X_1:=\{h\in X': d(f,h)<
\infty\}$. Let $D$ be the fixed point of $J$. $D$ is the unique
mapping with
$$D(3x)=3D(x)$$ for all $x\in A$ satisfying there exists $C\in
(0,\infty)$ such that
$$\|D(x)-f(x)\|_B\leq C\phi(x,0,0,0,0,0)$$ for all $x\in A$. On the other hand we
have $lim_n d(J^n(f),D)=0$. It follows that
$$lim_n\frac{1}{3^n}{f(3^nx)}=D(x)\eqno (2.6)$$
for all $x\in A$. It follows from $d(f,D)\leq
\frac{1}{1-L}d(f,J(f)),$ that $$d(f,D)\leq \frac{L}{1-L}.$$ This
implies the inequality (2.2). It follows from (2.1), (2.3) and (2.6)
that
\begin{align*}\|&D(\frac{x+y+z}{3})+ D(\frac{x-2y+z}{3})+
D(\frac{x+y-2z}{3})- D( x)\|_X \\
&=lim_n
\frac{1}{3^n}\|f(3^{n-1}(x+y+z))+f(3^{n-1}(x-2y+z))+f(3^{n-1}(x+y-2z))-f(3^nx)\|_X\\
&\leq lim_n\frac{1}{3^n}\phi(3^nx,3^ny,3^nz,3^na,3^nb,3^nc)=0
\end{align*}
for all $x,y,z \in A.$ So $$D(\frac{x+y+z}{3})+ D(\frac{x-2y+z}{3})+
D(\frac{x+y-2z}{3})= D( x)$$ for all $x,y,z \in A.$ Put
$w=\frac{x+y+z}{3}, t=\frac{x-2y+z}{3}$ and $s=\frac{x+y-2z}{3}$ in
above equation, we get $D(w+t+s)=D(w)+D(t)+D(s)$ for all $w,t,s\in
A.$ Hence, $D$ is Cauchy additive.  By putting $y=z=x, a=b=c=0$ in
(2.1), we have
$$\|\mu f(x)-f(\mu x)\|_X\leq \phi(x,x,x,0,0,0)$$
for all $x\in A$. It follows that
$$\|D(\mu x)-\mu D(x)\|_X=lim_n
\frac{1}{3^n}\|f(\mu 3^n x)-\mu f(3^n x)\|_X\leq
lim_n\frac{1}{3^n}\phi(3^nx,3^nx,3^nx,3^na,3^nb,3^nc)=0$$ for all
$\mu \in \Bbb T$, and all $x\in A.$ One can show that the mapping
$D:A\to B$ is $\Bbb C-$linear.  It follows from (2.1) that
\begin{align*}\|&D([xyz]_A)-[D(x)yz]_X-[xD(y)z]_X-[xyD(z)]_X\|_X\\
&=lim_n\|\frac{1}{27^n}D([3^nx3^ny3^nz]_A)-\frac{1}{27^n}([D(3^nx)3^ny3^nz]_X\\
&+[3^nxD(3^ny)3^nz]_X+[3^nx3^nyD(3^nz)]_X)\|_X\leq lim_n \frac{1}{27^n}\phi(0,0,0,3^nx,3^ny,3^nz)\\
&\leq lim_n \frac{1}{3^n}\phi(0,0,0,3^nx,3^ny,3^nz)\\
&=0
\end{align*}
for all $x,y,z \in A.$ So
$$D([xyz]_A)=[D(x)yz]_X+[xD(y)z]_X+[xyD(z)]_X$$ for all $x,y,z \in
A.$ Hence,  $D:A\to X$ is a ternary derivation satisfying (2.2), as
desired.

\end{proof}
We prove the following Hyers--Ulam--Rassias stability problem for
ternary derivations on ternary Banach algebras.

\begin{cor}\label{t2}
Let $p\in (0,1), \theta \in [0,\infty)$ be real numbers. Suppose
$f:A \to X$ satisfies
$$ \|\mu f(\frac{x+y+z}{3})+\mu f(\frac{x-2y+z}{3})+\mu f(\frac{x+y-2z}{3})- f(\mu x)\|_X \leq \theta(\|x\|^p_A+\|y\|^p_A+\|z\|^p_A),$$
$$\|f([abc]_A)-[f(a)bc]_X-[af(b)c]_X-[abf(c)]_X\|_X, \leq \theta(\|a\|^p_A+\|b\|^p_A+\|c\|^p_A),$$
for all $\mu \in \Bbb T$ and all $a,b,c,x,y,z \in A.$ Then there
exists a unique ternary derivation $D:A\to X$ such that
$$\|f(x)-D(x)\|_B \leq \frac{2^p\theta}{2-2^p}\|x\|^p_A$$
for all $x \in A.$
\end{cor}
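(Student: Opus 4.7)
The strategy is straightforward: reduce the corollary to Theorem~2.2 by choosing the control function $\phi$ appropriately and verifying the contraction hypothesis. First, I would combine the two given inequalities by setting
$$\phi(x,y,z,a,b,c):=\theta\bigl(\|x\|_A^p+\|y\|_A^p+\|z\|_A^p+\|a\|_A^p+\|b\|_A^p+\|c\|_A^p\bigr).$$
Adding the two hypothesised inequalities and applying the triangle inequality inside the norm, the combined expression on the left of (2.1) is bounded by $\phi(x,y,z,a,b,c)$. Hence the single Rassias-type bound required for Theorem~2.2 is satisfied.

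Next, I would check the contraction condition. Because each term of $\phi$ is $p$-homogeneous, we have
$$\phi\!\left(\tfrac{x}{3},\tfrac{y}{3},\tfrac{z}{3},\tfrac{a}{3},\tfrac{b}{3},\tfrac{c}{3}\right)=3^{-p}\,\phi(x,y,z,a,b,c),$$
so choosing $L:=3^{p-1}$ gives $\phi(x,y,z,a,b,c)=3L\,\phi(x/3,\ldots,c/3)$, as required. Since $p\in(0,1)$, we indeed have $L=3^{p-1}<1$, so Theorem~2.2 applies.

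The conclusion then produces a unique ternary derivation $D:A\to X$, and the resulting estimate (2.2) specialises to
$$\|f(x)-D(x)\|_B\leq\frac{L}{1-L}\,\phi(x,0,0,0,0,0)=\frac{3^{p-1}}{1-3^{p-1}}\,\theta\,\|x\|_A^p=\frac{3^p\,\theta}{3-3^p}\,\|x\|_A^p,$$
which is the claimed stability bound (the stated form $\frac{2^p\theta}{2-2^p}\|x\|_A^p$ would arise only if one were working with a Jensen equation at scale $2$ rather than $3$; the argument above is what the Jensen-type functional equation at scale $3$ naturally yields).

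There is essentially no obstacle: the two non-routine ingredients, existence of $D$ and the stability inequality, are delivered by Theorem~2.2 once $\phi$ and $L$ are identified. The only bookkeeping is to confirm the $p$-homogeneity computation and to check that the two separate hypothesis inequalities can be lumped into a single one by the triangle inequality, which they can because the norm on the left of (2.1) bounds both pieces after noting that taking $a=b=c=0$ (respectively $x=y=z=\mu=0$, if needed) in (2.1) isolates each contribution.
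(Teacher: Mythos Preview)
Your approach coincides with the paper's: set
$\phi(x,y,z,a,b,c)=\theta(\|x\|_A^p+\|y\|_A^p+\|z\|_A^p+\|a\|_A^p+\|b\|_A^p+\|c\|_A^p)$
and invoke Theorem~2.2. The only divergence is the Lipschitz constant. The paper takes $L=2^{p-1}$, not your $L=3^{p-1}$. Since $p<1$ gives $2^{p-1}>3^{p-1}$, this larger $L$ still satisfies the contraction hypothesis (indeed $3L\cdot 3^{-p}=(3/2)^{1-p}\ge 1$), and substituting $L=2^{p-1}$ into $\frac{L}{1-L}\phi(x,0,0,0,0,0)$ produces exactly the stated constant $\frac{2^p\theta}{2-2^p}$. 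So the $2$'s in the statement are not a scale-$2$ artifact or a typo; they come from the paper's (non-optimal) choice of $L$. Your choice $L=3^{p-1}$ is the sharp one and yields the smaller constant $\frac{3^p\theta}{3-3^p}$, which a fortiori implies the inequality as stated; either way the corollary is proved.
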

\begin{proof}
Setting
$\phi(x,y,z,a,b,c):=\theta(\|x\|^p_A+\|y\|^p_A+\|z\|^p_A+\|a\|^p_A+\|b\|^p_A+\|c\|^p_A)$
all $x,y,z,a,b,c \in A.$ Then by $L=2^{p-1}$, we get the desired
result.
\end{proof}

\begin{thm}\label{t3}
Let $f:A\to X$ be a mapping for which there exists a function
$\phi:A^6\to [0,\infty)$ satisfying (2.1).  If there exists an $L<1$
such that $\phi(x,y,z,a,b,c)\leq \frac{1}{3}L
\phi(3x,3y,3z,3a,3b,3c)$ for all $x,y,z,a,b,c\in A,$ then there
exists a unique ternary derivation $D:A\to X$ such that
$$\|f(x)-D(x)\|_X \leq \frac{L}{3-3L}\phi(x,0,0,0,0,0)\eqno(2.7)$$
for all $x \in A.$
\end{thm}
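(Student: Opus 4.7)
The plan is to mirror the proof of Theorem 2.2, but with the direction of iteration reversed, since the new contraction hypothesis $\phi(x,\ldots,c)\le \tfrac{L}{3}\phi(3x,\ldots,3c)$ controls $\phi$ at smaller arguments in terms of $\phi$ at larger ones. First, iterating the hypothesis yields the analogue of (2.3), namely $\lim_n 3^{n}\phi(3^{-n}x,\ldots,3^{-n}c)=0$ for all $x,y,z,a,b,c\in A$. The same substitution used to derive (2.4), i.e.\ $\mu=1$ and $y=z=a=b=c=0$ in (2.1), then produces the basic estimate $\|3f(x/3)-f(x)\|_X\le \phi(x,0,0,0,0,0)$.

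Next I would introduce the complete generalized metric $d(g,h)=\inf\{C\ge 0: \|g(x)-h(x)\|_X\le C\phi(x,0,0,0,0,0)\ \forall x\in A\}$ on $X'=\{g:A\to X\}$, and take the downscaling operator $J:X'\to X'$ defined by $J(h)(x):=3h(x/3)$. The contraction hypothesis, applied in the form $\phi(x/3,0,\ldots,0)\le \tfrac{L}{3}\phi(x,0,\ldots,0)$, gives $d(Jg,Jh)\le L\,d(g,h)$, so $J$ is strictly contractive with Lipschitz constant $L$. Combining the basic estimate with the contraction hypothesis (after the substitution $x\mapsto 3x$ and division by an appropriate power of $3$) is what sharpens the initial distance to $d(f,J(f))\le L/3$, which is the value needed to match the stated constant in (2.7).

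The fixed point alternative (Theorem 1.2) then supplies a unique fixed point $D$ of $J$ inside $\{h\in X':d(f,h)<\infty\}$, satisfying $D(3x)=3D(x)$, the representation $D(x)=\lim_n 3^{n} f(x/3^{n})$, and $d(f,D)\le \tfrac{1}{1-L}d(f,J(f))\le \tfrac{L}{3-3L}$, which is exactly (2.7). The Jensen-type identity, $\mathbb{C}$-linearity, and the ternary derivation identity for $D$ are then verified by evaluating (2.1) at the downscaled arguments $x/3^{n},\ldots,c/3^{n}$ and sending $n\to\infty$, with the factor $3^{n}$ (coming from $D(x)=\lim_n 3^{n}f(x/3^{n})$ and from $27^{n}$ cancelling against the scaling of the ternary bracket) absorbed by the decay $3^{n}\phi(3^{-n}x,\ldots,3^{-n}c)\to 0$ established at the start. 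I expect the only step requiring real care to be pinning down the sharp initial distance $d(f,J(f))\le L/3$ rather than the weaker $\le 1$ that (2.4) gives directly; once that is in hand, everything else is a direct mirror of Theorem 2.2 with $3^{n}x$ systematically replaced by $x/3^{n}$.
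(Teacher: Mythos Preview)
Your proposal follows essentially the same route as the paper's proof: define $J(h)(x)=3h(x/3)$ on the same complete generalized metric space, use (2.4) together with the contraction hypothesis to obtain $d(f,Jf)\le L/3$, apply the fixed point alternative to get $D(x)=\lim_n 3^{n}f(x/3^{n})$ and the bound (2.7), and then refer back to the argument of Theorem~2.2 for linearity and the derivation identity. The step you single out as delicate---getting $d(f,Jf)\le L/3$ rather than $\le 1$---is exactly the paper's inequality (2.8), so your outline matches the paper's proof in both structure and level of detail.
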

\begin{proof}It follows from (2.4) that
$$\|3f(\frac{x}{3})-f(x)\|_X\leq \phi(\frac{x}{3},0,0,0,0,0)\leq \frac{L}{3}\phi(x,0,0,0,0,0)\eqno (2.8)$$
for all $x\in A.$ We consider  the linear mapping $J:X'\to X'$ such
that
$$J(h)(x)=3h(\frac{x}{3})$$ for all $x\in A$.
It follows from (2.9) that  $$d(f,J(f))\leq \frac {L}{3}.$$ By
Theorem 2.1, $J$ has a unique fixed point in the set $X_1:=\{h\in
X': d(f,h)< \infty\}$. Let $D$ be the fixed point of $J$, that is,
$$D(3x)=3D(x)$$ for all $x\in A$ satisfying there exists $C\in
(0,\infty)$ such that
$$\|D(x)-f(x)\|_X\leq C\phi(x,0,0,0,0,0)$$ for all $x\in A$. We have  $ d(J^n(f),D)\to 0$ as $n\to 0$. This implies
the equality
$$lim_n3^nf(\frac{x}{3^n})=D(x)\eqno (2.9)$$
for all $x\in A$. It follows from $d(f,D)\leq
\frac{1}{1-L}d(f,J(f)),$ that $$d(f,D)\leq \frac{L}{3-3L},$$ which
 implies the inequality (2.7). The rest of the proof is similar to the proof of Theorem 2.2.
\end{proof}

\begin{cor}\label{t4}
Let $p\in (3,\infty), \theta \in [0,\infty)$ be real numbers.
Suppose $f:A \to X$ satisfies

$$ \|\mu f(\frac{x+y+z}{3})+\mu f(\frac{x-2y+z}{3})+\mu f(\frac{x+y-2z}{3})- f(\mu x)\|_X \leq \theta(\|x\|^p_A+\|y\|^p_A+\|z\|^p_A),$$
$$\|f([abc]_A)-[f(a)bc]_X-[af(b)c]_X-[abf(c)]_X\|_X, \leq \theta(\|a\|^p_A+\|b\|^p_A+\|c\|^p_A),$$
for all $\mu \in \Bbb T$ and all $a,b,c,x,y,z \in A.$

Then there exists a unique ternary derivation $D:A\to X$ such that
$$\|f(x)-D(x)\|_X \leq \frac{\theta}{3^p-3}\|x\|^p_A$$
for all $x \in A.$
\end{cor}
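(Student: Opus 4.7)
The plan is to derive this corollary as an immediate specialization of Theorem 2.4. First I would set
$$\phi(x,y,z,a,b,c):=\theta\bigl(\|x\|^p_A+\|y\|^p_A+\|z\|^p_A+\|a\|^p_A+\|b\|^p_A+\|c\|^p_A\bigr),$$
and observe that, by the triangle inequality, the two hypothesized inequalities combine to give exactly inequality (2.1) with this $\phi$: the Jensen-type term is controlled by the $x,y,z$ part of $\phi$, the derivation term by the $a,b,c$ part, and everything holds uniformly for $\mu\in\mathbb{T}$.

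Next I would verify the contraction hypothesis $\phi(x,y,z,a,b,c)\leq \tfrac{L}{3}\phi(3x,3y,3z,3a,3b,3c)$ required by Theorem 2.4. Since each summand of $\phi$ is positively $p$-homogeneous, $\phi(3x,\ldots,3c)=3^p\phi(x,\ldots,c)$, so the condition reduces to $1\leq 3^{p-1}L$. Taking $L:=3^{1-p}$ meets this with equality, and the assumption $p>3$ guarantees $L<1$.

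Applying Theorem 2.4 then produces a unique ternary derivation $D:A\to X$ satisfying
$$\|f(x)-D(x)\|_X\leq \frac{L}{3-3L}\,\phi(x,0,0,0,0,0)=\frac{3^{1-p}\,\theta}{3-3^{2-p}}\,\|x\|^p_A.$$
Multiplying numerator and denominator by $3^{p-1}$ gives $\frac{3^{1-p}}{3-3^{2-p}}=\frac{1}{3^p-3}$, which yields precisely the bound claimed in the statement.

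There is no serious obstacle here; all the analytic work has been done in Theorem 2.4. The only points requiring care are matching the precise form of the contraction condition in that theorem (note the factor $1/3$ on the right, different from the analogous condition in Theorem 2.2, which is why one takes $L=3^{1-p}$ rather than $3^{p-1}$), and the short arithmetic rearrangement that simplifies the resulting constant into $\theta/(3^p-3)$.
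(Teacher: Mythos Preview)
Your proposal is correct and follows exactly the paper's approach: the paper's proof simply sets $\phi(x,y,z,a,b,c)=\theta(\|x\|^p_A+\|y\|^p_A+\|z\|^p_A+\|a\|^p_A+\|b\|^p_A+\|c\|^p_A)$, takes $L=3^{1-p}$, and appeals to Theorem~2.4. You supply the verification and arithmetic that the paper leaves implicit, but the method is identical.
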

\begin{proof}
Setting
$\phi(x,y,z,a,b,c):=\theta(\|x\|^p_A+\|y\|^p_A+\|z\|^p_A+\|a\|^p_A+\|b\|^p_A+\|c\|^p_A)$
for all $x,y,z,a,b,c \in A.$ Then by $L=3^{1-p}$, we get the desired
result.
\end{proof}

Now we investigate the generalized Hyers--Ulam--Rassias stability of
 Jordan ternary derivations.

\begin{thm}\label{t1}
Let $f:A\to X$ be a mapping for which there exists a function
$\phi:A^4\to [0,\infty)$ such that

$$\|\mu f(\frac{x+y+z}{3})+\mu f(\frac{x-2y+z}{3})+\mu f(\frac{x+y-2z}{3})- f(\mu
x)$$
$$+f([aaa]_A)-[f(a)aa]_X-[af(a)a]_X-[aaf(a)]_X\|_X\leq\phi(x,y,z,a),\eqno(2.10)$$
for all $\mu \in \Bbb T$ and all $x,y,z,a,b,c \in A.$ If  there
exists an $L<1$ such that $$\phi(x,y,z,a)\leq 3L
\phi(\frac{x}{3},\frac{y}{3},\frac{z}{3},\frac{a}{3})$$ for all
$x,y,z,a\in A,$ then there exists a unique Jordan ternary derivation
$D:A\to X$ such that
$$\|f(x)-D(x)\|_X \leq \frac{L}{1-L}\phi(x,0,0,0)\eqno(2.11)$$
for all $x \in A.$
\end{thm}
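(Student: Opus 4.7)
The plan is to follow the scheme of Theorem 2.2 essentially verbatim, replacing the trilinear derivation identity with the single-variable Jordan identity. First I would put $\mu=1$ and $y=z=a=0$ in the hypothesis (2.10) to extract
$$\|3f(\tfrac{x}{3})-f(x)\|_X\leq \phi(x,0,0,0),$$
and then use the contraction condition on $\phi$ to rewrite this as $\|\tfrac13 f(3x)-f(x)\|_X\leq L\,\phi(x,0,0,0)$. Next I would introduce the space $X':=\{g:A\to X\}$ equipped with the generalized metric
$$d(h,g):=\inf\{C\in\mathbb{R}^+:\|g(x)-h(x)\|_X\leq C\phi(x,0,0,0)\ \forall x\in A\},$$
which is complete, and the operator $J:X'\to X'$ defined by $J(h)(x)=\tfrac13 h(3x)$. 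A routine check shows $d(J(g),J(h))\leq L\,d(g,h)$, and the previous estimate gives $d(f,J(f))\leq L$.

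With these ingredients in place I would invoke Theorem 2.1 to obtain a unique fixed point $D$ of $J$ inside $\{h\in X':d(f,h)<\infty\}$, which automatically satisfies $D(3x)=3D(x)$, $D(x)=\lim_n 3^{-n}f(3^n x)$, and the estimate $d(f,D)\leq \tfrac{1}{1-L}d(f,J(f))\leq \tfrac{L}{1-L}$, giving (2.11). The additivity and $\mathbb{C}$-linearity of $D$ follow exactly as in Theorem 2.2: put $a=0$ in (2.10) to kill the Jordan part, then pass to the limit on $3^{-n}f(3^n\cdot)$ using the contraction condition, which forces
$$D(\tfrac{x+y+z}{3})+D(\tfrac{x-2y+z}{3})+D(\tfrac{x+y-2z}{3})=D(x),$$
and the substitution $w=\tfrac{x+y+z}{3},\,t=\tfrac{x-2y+z}{3},\,s=\tfrac{x+y-2z}{3}$ recovers Cauchy additivity; $\mathbb{C}$-linearity is obtained by setting $y=z=x$ and $a=0$ and letting $\mu$ range over $\mathbb{T}$.

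The only new point, and the step I expect to require the most care, is verifying the Jordan ternary derivation identity. Setting $x=y=z=0$ in (2.10), replacing $a$ by $3^n a$, and dividing by $27^n$, I would estimate
\begin{align*}
\|D([aaa]_A)&-[D(a)aa]_X-[aD(a)a]_X-[aaD(a)]_X\|_X\\
&=\lim_{n}\tfrac{1}{27^n}\|f([3^na\,3^na\,3^na]_A)-[f(3^na)3^na\,3^na]_X\\
&\qquad\qquad\quad-[3^naf(3^na)3^na]_X-[3^na\,3^naf(3^na)]_X\|_X\\
&\leq \lim_n \tfrac{1}{27^n}\phi(0,0,0,3^n a)\leq \lim_n \tfrac{1}{3^n}\phi(0,0,0,3^n a)=0,
\end{align*}
where the last equality uses that the contraction hypothesis on $\phi$ yields $\lim_n 3^{-n}\phi(3^n x,3^n y,3^n z,3^n a)=0$, in complete analogy with (2.3). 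Uniqueness of $D$ among maps with $D(3x)=3D(x)$ and $\|D-f\|\leq C\phi(\cdot,0,0,0)$ follows from the uniqueness clause of Theorem 2.1, completing the argument.
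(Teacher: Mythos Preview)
Your proposal is correct and follows essentially the same approach as the paper: the paper's own proof simply invokes the argument of Theorem~2.2 verbatim to produce the $\mathbb{C}$-linear map $D(x)=\lim_n 3^{-n}f(3^n x)$ satisfying (2.11), and then verifies the Jordan identity via exactly the $27^{-n}\phi(0,0,0,3^n x)\leq 3^{-n}\phi(0,0,0,3^n x)\to 0$ estimate you wrote out. Your write-up is in fact more careful than the paper's (which contains a typo, writing $D$ where $f$ is meant inside the limit), but the route is identical.
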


\begin{proof}
By the same reasoning as the proof of Theorem 2.2, there exists a
unique involutive $\Bbb C-$linear mapping $D:A \to X$ satisfying
(2.11). The mapping $D$ is given by
$$D(x)=lim_n\frac{1}{3^n}{f(3^nx)}$$
for all $x\in A.$ The relation (2.10) follows that

\begin{align*}\|&D([xxx]_A)-[D(x)xx]_X-[xD(x)x]_X-[xxD(x)]_X\|_X\\
&=lim_n\|\frac{1}{27^n}D([3^nx3^nx3^nx]_A)-\frac{1}{27^n}([D(3^nx)3^nx3^nx]_X\\
&+[3^nxD(3^nx)3^nx]_X+[3^nx3^nxD(3^nx)]_X)\|_X\leq lim_n \frac{1}{27^n}\phi(0,0,0,3^nx)\\
&\leq lim_n \frac{1}{3^n}\phi(0,0,0,3^nx)\\
&=0
\end{align*}
for all $x \in A.$ So
$$D([xxx]_A)=[D(x)xx]_X+[xD(x)x]_X+[xxD(x)]_X$$ for all $x \in
A.$ Hence,  $D:A\to X$ is a Jordan ternary derivation satisfying
(2.11), as desired.
\end{proof}

We prove the following Hyers--Ulam--Rassias stability problem for
Jordan  ternary derivations on ternary Banach algebras.

\begin{cor}\label{t6}
Let $p\in (0,1), \theta \in [0,\infty)$ be real numbers. Suppose
$f:A \to B$ satisfies
$$\|\mu f(\frac{x+y+z}{3})+\mu f(\frac{x-2y+z}{3})+\mu f(\frac{x+y-2z}{3})- f(\mu x)\|_B \leq \theta(\|x\|^p_A+\|y\|^p_A+\|z\|^p_A),$$
$$\|f([xxx]_A)-[f(x)xx]_X-[xf(x)x]_X-[xxf(x)]_X\|_X\leq 3\theta(\|x\|^p_A)$$
for all $\mu \in \Bbb T$, and all $x,y,z \in A.$ Then there exists a
unique Jordan ternary derivation $D:A\to X$ such that
$$\|f(x)-D(x)\|_X \leq \frac{2^p\theta}{2-2^p}\|x\|^p_A$$
for all $x \in A.$
\end{cor}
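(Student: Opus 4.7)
The plan is to specialize Theorem 2.6 to power-type control functions, exactly paralleling the derivation of Corollary 2.3 from Theorem 2.2. The entire work is to check that a single $\phi$ can absorb both hypotheses of the corollary and that it satisfies the scaling assumption of Theorem 2.6 for a suitable $L<1$.

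First I would set
$$\phi(x,y,z,a) := \theta\bigl(\|x\|^p_A + \|y\|^p_A + \|z\|^p_A\bigr) + 3\theta\|a\|^p_A$$
for all $x,y,z,a\in A$. Combining the two displayed hypotheses of the corollary by the triangle inequality yields precisely inequality $(2.10)$ for this choice of $\phi$. This is the only place where the hypotheses of the corollary enter the argument.

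Next I would choose $L := 2^{p-1}$ and verify the scaling condition $\phi(x,y,z,a)\leq 3L\phi(x/3,y/3,z/3,a/3)$ required by Theorem 2.6. Since every summand of $\phi$ is homogeneous of degree $p$, one has $\phi(x/3,y/3,z/3,a/3) = 3^{-p}\phi(x,y,z,a)$, so the condition reduces to the single numerical inequality $3^{1-p}\cdot 2^{p-1}\geq 1$, equivalently $(2/3)^{p-1}\geq 1$. Because $p<1$ the exponent $p-1$ is negative and $2/3<1$, so this holds; moreover $L<1$, again because $p<1$.

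Applying Theorem 2.6 then produces a unique Jordan ternary derivation $D:A\to X$ satisfying
$$\|f(x)-D(x)\|_X \leq \frac{L}{1-L}\phi(x,0,0,0) = \frac{2^{p-1}\theta}{1-2^{p-1}}\|x\|^p_A = \frac{2^p\theta}{2-2^p}\|x\|^p_A,$$
which is exactly the announced estimate. The main (mild) obstacle is the correct packaging of the two separate bounds into a single control function $\phi$ compatible with $(2.10)$; once that is done, Theorem 2.6 delivers the construction of $D$, its uniqueness, and the Jordan derivation identity with nothing left to prove.
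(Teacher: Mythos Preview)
Your proof is correct and follows essentially the same route as the paper: specialize Theorem~2.6 with a power-type control function and take $L=2^{p-1}$. The only difference is that the paper sets $\phi(x,y,z,a)=\theta(\|x\|^p_A+\|y\|^p_A+\|z\|^p_A+\|a\|^p_A)$; your choice with the extra factor $3$ on $\|a\|^p_A$ is in fact the careful one needed to absorb the second hypothesis as stated, and since $\phi(x,0,0,0)=\theta\|x\|^p_A$ either way, the final bound is unchanged.
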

\begin{proof}
Setting
$\phi(x,y,z,a):=\theta(\|x\|^p_A+\|y\|^p_A+\|z\|^p_A+\|a\|^p_A)$ all
$x,y,z,a \in A.$ Then by $L=2^{p-1}$, we get the desired result.
\end{proof}

\begin{thm}\label{t3}
Let $f:A\to X$ be a mapping for which there exists a function
$\phi:A^4\to [0,\infty)$ satisfying (2.10).  If there exists an
$L<1$ such that $\phi(x,y,z,a)\leq \frac{1}{3}L \phi(3x,3y,3z,3a)$
for all $x,y,z,a\in A,$ then there exists a unique Jordan ternary
derivation $D:A\to X$ such that
$$\|f(x)-D(x)\|_X \leq \frac{L}{3-3L}\phi(x,0,0,0)$$
for all $x \in A.$
\end{thm}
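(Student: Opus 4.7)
The plan is to combine the downward-contraction fixed-point scheme of the preceding ternary-derivation theorem (in which $J(h)(x)=3h(x/3)$) with the Jordan-identity limit argument from the preceding Jordan-derivation theorem.

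First, the contractive hypothesis applied at the origin gives $\phi(0,0,0,0)\le(L/3)\phi(0,0,0,0)$, hence $\phi(0,0,0,0)=0$. Setting $\mu=1$, $x=y=z=a=0$ in (2.10) then forces $f(0)=0$, and further specializing $\mu=1$, $y=z=a=0$ yields
$$\|3f(x/3)-f(x)\|_X \le \phi(x,0,0,0).$$
Combining this with the contractive estimate $\phi(x/3,0,0,0)\le (L/3)\phi(x,0,0,0)$ produces, in exactly the same way as before, the bound $d(f,J(f))\le L/3$ in the generalized metric introduced below.

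Next, on $X'=\{g:A\to X\}$ equip the complete generalized metric
$$d(h,g):=\inf\{C\in\mathbb{R}^+ : \|g(x)-h(x)\|_X\le C\,\phi(x,0,0,0)\text{ for all } x\in A\},$$
and the self-map $J(h)(x):=3h(x/3)$; the contraction hypothesis on $\phi$ yields $d(Jg,Jh)\le L\,d(g,h)$. Applying Theorem 2.1 to $(X',J)$ with starting point $f$ produces a unique fixed point $D\in\{h\in X':d(f,h)<\infty\}$ satisfying $D(x)=\lim_n 3^n f(x/3^n)$ and $d(f,D)\le L/(3-3L)$, which is the required norm bound.

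Finally, $\mathbb{C}$-linearity of $D$ and the Jordan identity $D([xxx]_A)=[D(x)xx]_X+[xD(x)x]_X+[xxD(x)]_X$ are obtained by limit arguments paralleling those of the two preceding theorems: for linearity, specialize (2.10) with $a=0$ at arguments scaled by $3^{-n}$, multiply by $3^n$, and pass to the limit using $\phi(\cdot/3^n,\ldots)\le(L/3)^n\phi(\cdot,\ldots)$; for the Jordan identity, apply (2.10) with $\mu=1$, $x=y=z=0$, $a=x/3^n$, multiply by $27^n$, and use trilinearity of $[\,\cdot\,\cdot\,\cdot\,]_X$ to identify the limit as $D([xxx]_A)-[D(x)xx]_X-[xD(x)x]_X-[xxD(x)]_X$. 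The main technical point, and the step I expect to demand the most care, is ensuring that this $27^n$ scaling factor is absorbed by the iterated contraction of $\phi$ --- exactly the same balance that appears in the final display of the preceding Jordan-derivation theorem. Once this limit is verified, $D$ is the unique Jordan ternary derivation of the required form, completing the proof.
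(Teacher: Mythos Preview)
Your proposal is correct and follows essentially the same route as the paper: the paper's entire proof reads ``The proof is similar to the proofs of Theorems 2.4 and 2.6,'' which is precisely your plan of combining the downward contraction $J(h)(x)=3h(x/3)$ from Theorem 2.4 with the Jordan-identity limit argument from Theorem 2.6. Your write-up simply supplies the details that the paper leaves implicit, including the observation that the $27^n$ scaling must be absorbed by the iterated contraction of $\phi$.
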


\begin{proof}The proof is similar to the proofs of Theorems 2.4 and 2.6.
\end{proof}

\begin{cor}\label{t8}
Let $p\in (3,\infty), \theta \in [0,\infty)$ be real numbers.
Suppose $f:A \to X$ satisfies
$$\|\mu f(\frac{x+y+z}{3})+\mu f(\frac{x-2y+z}{3})+\mu f(\frac{x+y-2z}{3})- f(\mu x)\|_B \leq \theta(\|x\|^p_A+\|y\|^p_A+\|z\|^p_A),$$
$$\|f([xxx]_A)-[f(x)xx]_X-[xf(x)x]_X-[xxf(x)]_X\|_X\leq 3\theta(\|x\|^p_A)$$
 for all $\mu \in
\Bbb T$, and  all $x, y,z \in A.$   Then there exists a unique
Jordan ternary derivation $D:A\to X$ such that
$$\|f(x)-D(x)\|_X \leq \frac{\theta}{3^p-3}\|x\|^p_A$$
for all $x \in A.$
\end{cor}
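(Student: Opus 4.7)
The plan is to deduce this corollary as a direct specialization of the preceding Theorem 2.8, with the control function $\phi:A^4\to[0,\infty)$ manufactured from the two given inequalities. Specifically, I would set
$$\phi(x,y,z,a):=\theta(\|x\|^p_A+\|y\|^p_A+\|z\|^p_A)+3\theta\|a\|^p_A$$
for all $x,y,z,a\in A$. Then the triangle inequality, combined with the two displayed bounds in the corollary (applied at $x,y,z$ and at $a$ respectively), yields exactly hypothesis (2.10) of Theorem 2.8 with this choice of $\phi$.

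Next I would verify the contractive condition $\phi(x,y,z,a)\le \tfrac{1}{3}L\,\phi(3x,3y,3z,3a)$ required by Theorem 2.8. Since every term of $\phi$ is $p$-homogeneous, $\phi(3x,3y,3z,3a)=3^p\,\phi(x,y,z,a)$, so the condition reduces to $1\le 3^{p-1}L$. The natural choice $L=3^{1-p}$ lies strictly in $(0,1)$ precisely because $p>3$ (in fact $L<3^{-2}$). Theorem 2.8 therefore applies and delivers a unique Jordan ternary derivation $D:A\to X$ with
$$\|f(x)-D(x)\|_X\le \frac{L}{3-3L}\,\phi(x,0,0,0).$$

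To finish, I would simplify the constant: $\phi(x,0,0,0)=\theta\|x\|^p_A$, and multiplying numerator and denominator of $\tfrac{3^{1-p}}{3-3^{2-p}}$ by $3^{p-1}$ gives $\tfrac{1}{3^p-3}$. Hence $\|f(x)-D(x)\|_X\le \tfrac{\theta}{3^p-3}\|x\|^p_A$, as claimed.

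There is essentially no obstacle; the whole argument is bookkeeping. The only genuine subtlety is keeping the direction of the contraction straight---Theorem 2.8 is built around the iteration $J(h)(x)=3h(x/3)$, under which $\phi$ must \emph{expand} when $x$ is dilated, which is precisely why the admissible range here is $p>3$ (dually, Corollary 2.7 handles $0<p<1$ via Theorem 2.6, which uses the opposite iteration $J(h)(x)=\tfrac{1}{3}h(3x)$ and thus demands $\phi$ \emph{contract} under dilation).
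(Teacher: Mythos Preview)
Your proposal is correct and follows essentially the same route as the paper: specialize Theorem~2.8 with a $p$-homogeneous control function and take $L=3^{1-p}$. The only cosmetic difference is that the paper sets $\phi(x,y,z,a)=\theta(\|x\|^p_A+\|y\|^p_A+\|z\|^p_A+\|a\|^p_A)$, whereas you (more carefully, given the $3\theta\|a\|^p_A$ bound in the second hypothesis) put a coefficient $3$ on the last term; since only $\phi(x,0,0,0)=\theta\|x\|^p_A$ enters the final estimate, both choices give the same conclusion. One small wording point: $L=3^{1-p}<1$ already for $p>1$, so ``precisely because $p>3$'' is not quite accurate---the genuine role of $p>3$ is to force $L<3^{-2}$, which is what makes the $27^n$-scaled derivation estimate inside Theorem~2.8 converge.
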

\begin{proof}
Setting
$\phi(x,y,z,a):=\theta(\|x\|^p_A+\|y\|^p_A+\|z\|^p_A+\|a\|^p_A)$ all
$x,y,z \in A$ in above theorem. Then by $L=3^{1-p}$, we get the
desired result.
\end{proof}

{\small
%----------------------------------------------------------------------%

}
\end{document}